\author{H.~M.~Khudaverdian}
\author{Th.~Th. Voronov}
\address{Department  of Mathematics, University of Manchester, Manchester, M13 9PL, UK}
\address{Institute for Information Transmission Problems of Russian Academy of Sciences (Kharkevich Institute), Moscow, Russia}
\email{khudian@manchester.ac.uk, khudian37@gmail.com}
\address{Department  of Mathematics, University of Manchester, Manchester, M13 9PL, UK}
\email{theodore.voronov@manchester.ac.uk}
\title[On  Buchstaber--Rees theory and   generalization]{On   the  Buchstaber--Rees theory of ``Frobenius $n$-homomorphisms'' and its generalization}
\date{10 (23) August 2022}
\def\co{\colon\thinspace}
\newtheorem{thm}{Theorem}[section]
\newtheorem{lm}{Lemma}[section]
\theoremstyle{definition}
\newtheorem{de}{Definition}[section]
\newtheorem{ex}{Example}[section]
\newtheorem{rem}{Remark}[section]
\def\a{\alpha}
\newcommand{\f}{\mathbf{f}}
\newcommand{\g}{\mathbf{g}}
\newcommand{\ps}{{{\psi}}}
\newcommand{\ch}{{\boldsymbol{\chi}}}
\DeclareMathOperator{\Ber}{Ber}
\DeclareMathOperator{\str}{str}
\newcommand{\ZZ}{{\mathbb Z}}
\newcommand{\RR}{\mathbb R}
\DeclareMathOperator{\fBer}{Ber_{\f}}
\newcommand{\F}{{\Phi}}
\renewcommand{\L}{{\mathcal L}}
\DeclareMathOperator{\Mat}{Mat}
\DeclareMathOperator{\diag}{diag}
\DeclareMathOperator{\Sym}{Sym}
\DeclareMathOperator{\ev}{ev}
\newcommand{\si}{{\boldsymbol{\psi}}}
\begin{document}

\maketitle

\begin{abstract}
This is a  survey of our results on the theory of  $n$-homomorphisms  of Buchstaber--Rees  and   its generalization that we obtained. In short, we are concerned with classes of linear maps between commutative rings that can be described   the    ``next level  after ring homomorphisms'' with respect to   multiplicative properties. Our main tool is a construction which we call  the ``characteristic function''  --- whose functional properties encode   algebraic properties of a linear map in question. Namely, if the characteristic function  is polynomial of degree $n$, the map is an $n$-homomorphism in the sense of Buchstaber--Rees, and our approach simplifies their theory substantially.   If the characteristic function is an irreducible  rational fraction with the numerator and denominator of degrees $p$ and $q$ respectively, we arrive at a new   notion of a ``$p|q$-homomorphism''.  Examples of $p|q$-homomorphisms are sums and differences of ring homomorphisms.  Our  construction is motivated  by our earlier results in superalgebra/supergeometry concerning Berezinians and super exterior powers.
\end{abstract}

\section{Introduction}

\subsection{}
Everybody knows that the sum of ring homomorphisms is not a homomorphism. But what is it then? Also, unlike for determinant, the trace of the product of matrices is not the product of the traces. Is there a meaningful description of how matrix trace behaves under multiplication?

The second question is classical and was investigated by Frobenius for the needs of his theory of characters of finite groups~\cite{frobenius:1896}. Frobenius introduced ``higher characters'' corresponding to a given character  (i.e., the character of a group representation in modern framework) by a recursive formula. The terms of this ``Frobenius recursion''  serve  for description of multiplicative properties of traces and characters.

\subsection{}
In 1996--2008, V.~M.~Buchstaber and E.~Rees,  with an  original motivation entirely different from   Frobenius's,  substantially advanced this circle of ideas and introduced what they called  \emph{$n$-homomorphisms}  of (commutative) algebras, or   \emph{``Frobenius $n$-homomorphisms''}. See their works~\cite{buchstaber_rees:1996,buchstaber_rees:1997,buchstaber_rees:2002,buchstaber_rees:2004,buchstaber_rees:2008}.
This notion   answers the first of the above questions, viz.: the sum of $n$ algebra homomorphisms is an $n$-homomorphism. (Informally, one may visualize all $n$-homomorphisms  this way as sums of homomorphisms.) It is interesting to mention the source of the original problem considered in~\cite{buchstaber_rees:1996}: it was a question   about an analog of a Hopf algebra structure for the algebra of functions on an ``$n$-valued group''; the latter is a notion Buchstaber introduced in early 1990s motivated by his earlier discovery of ``$n$-valued'' formal groups  arising in connection with   Pontrjagin classes in generalized cohomology theories~\cite{buchstaber:viniti1978}. So ultimately, the interest comes from topology, and the question is basically   which algebraic properties the ``diagonal'' or ``coproduct'' on the algebra of functions on an $n$-valued group has. And it is an $n$-homomorphism as established in~\cite{buchstaber_rees:1996}. (Then   the relation with  Frobenius's recursion was found~\cite{buchstaber_rees:1997}.)

According to general     Functional-Algebraic Duality   (see e.g. Shafarevich~\cite{shaf:a}), spaces correspond to rings and    maps of spaces      correspond to ring homomorphisms. In particular, points  correspond to  homomorphisms to   fields. One manifestation of this principle is   the Gelfand--Kolmogorov theorem about rings of continuous functions~\cite{gelfand-kolmogorov}. In Buchstaber and Rees's theory,    this   was extended to   $n$-homomorphisms\,---\,which were showed to correspond to maps to the  $n$th symmetric power  of a space.
\subsection{}
The authors of the present text, around 2004,    studied relation of   exterior powers and Berezinian  (superdeterminant).
The crucial difference from the usual determinant  is that Berezinian is rational and not polynomial. Moreover, the numerator and the denominator of the standard formula for Berezinian have no separate invariant meaning.  

Berezinian does not arise from  super exterior powers  defined using the sign rule  in any obvious way. There is no top power: the sequence of the exterior powers  of a superspace  stretches infinitely to the right, and  there is no place   for   superdeterminant there. (This in particular leads to various non-trivial and unexpected effects in   supermanifold integration theory, into which we do not go here, see~\cite{tv:gitnew}.)

Nevertheless,   we found that there is  a  non-obvious  connection between Berezinians and super exterior powers; in particular, it became possible to express  Berezinian of a supermatrix as the ratio   of    Hankel determinants built on the  supertraces of its exterior powers    (and which are polynomial invariants). The key to our theory was  the use of a ``rational characteristic function'' $\Ber(1+z\mathbf{A})$, where $\mathbf{A}$ is a linear operator   on a superspace. Its expansion at zero gives the supertraces of all exterior powers $\Lambda^k(\mathbf{A})$, while the expansion at infinity  gives in particular   $\Ber \mathbf{A}$. The fundamental statement here is a ``universal recurrence relation'' linking the two expansions. See~\cite{tv:ber}.

\subsection{}
It somehow came to our mind that the method that we developed for studying super exterior powers and Berezinians could be applied to the theory of  Buchstaber and Rees (about which we were  learning    first-hand), though there is no apparent connection of it with supermathematics.  Pursuing this highly informal analogy, we introduced the notion of  a   \emph{characteristic function of a linear map  of algebras} 
\begin{equation*}
  \f\co A\to B
\end{equation*}
and the related notion of an \emph{$\f$-Berezinian}. See~\cite{tv:frobenius-umn-eng}, also~\cite{tv:frobbieloviezha,tv:shortproof}.   The definition of this function is in the next section (Definition~\ref{def.char}). (We wish to stress to avoid a confusion that it is not about a superanalog  for the Buchstaber--Rees theory. It would be a different task, definitely worthwhile,    probably straightforward. To the contrary, we use  ideas from the super world for obtaining constructions in a classical setup.)

The   framework obtained substantially simplifies the    theory of $n$-homomorphisms.

It also  opens way  to  its  natural generalization, i.e.,  to what we have called  \emph{$p|q$-homomorphisms}. Informally,   $p|q$-homomorphisms answer  to the question about the   difference or,  more generally,        integral linear combinations of algebra homomorphisms. (Answer: such a linear combination is a $p|q$-homomorphism, where $p$ is the sum of the positive coefficients and $-q$ is   the sum of the negative coefficients.)  Our $p|q$-homomorphisms correspond to rational characteristic functions, while Buchstaber--Rees's $n$-homomorphisms correspond to polynomial characteristic functions. (One may speculate about an algebraic theory resulting from other types of functions.)

We also constructed an analog of the geometric part of the Buchstaber--Rees theory, as a generalization of symmetric powers. Here we have only part of the statements that we would like to have, and there are  questions still open.

\medskip
We dedicate this paper  to the memory of two outstanding mathematicians whom we lost in 2019:   Elmer Rees (1941--2019) and Alexandre Mikhailovich Vinogradov (1938--2019).

\section{Main tool: characteristic function for a linear map of algebras}

The setup is as follows.   Given commutative algebras with unit $A$ and $B$. Given a linear map (which is not an algebra homomorphism  in general)
\begin{equation}
 \f\co A\to B\,.
\end{equation}
Remark:  it is possible to relax the conditions on $A$ and $B$ by dropping the commutativity of $A$ at the expense of requiring that $\f$ is ``trace-like'', i.e. $\f(a_1a_2)=\f(a_2a_1)$. (This would be as in Frobenius's original problem about matrices.)   It is also possible to give a  generalization  to the supercase, but we do not do it here.

\begin{de}[\cite{tv:frobenius-umn-eng}]\label{def.char}
 The \emph{characteristic function} of $\f$, notation: $R(\f,a,z)$, is a formal power series in $z$ with coefficients in (in general, non-linear) maps $A\to B$, defined by
 \begin{equation}
  R(\f,a,z):=e^{\f\ln(1+az)}\,.
 \end{equation}
\end{de}

In greater detail,
\begin{equation}
  R(\f,a,z):=\exp\left(\sum_{k=1}^{+\infty}\frac{(-1)^{k-1}}{k}\,\f\bigl(a^k\bigr)\,z^k\right)\,.
 \end{equation}

 There is the obvious exponential property: for two linear maps $\f,\g\co A\to B$,
 \begin{equation}\label{eq.expprop}
   R(\f+\g,a,z)= R(\f,a,z) R(\g,a,z)\,.
 \end{equation}

\begin{ex}
 If $\f$ is an algebra homomorphism, then $\f\ln(1+az)=\ln(1+\f(a)z)$, hence $R(\f,a,z)=1+\f(a)z$ is a linear function of $z$.
\end{ex}

The \textbf{main idea} is to describe algebraic properties of $\f$ in terms of functional properties of $R(\f,a,z)$ as a function of $z$.

\begin{lm}
 The expansion of $R(\f,a,z)$ in $z$ is given explicitly by
 \begin{equation}
  R(\f,a,z)= 1+ \ps_1(\f,a)z+\ps_2(\f,a)z^2+\ldots
 \end{equation}
where $\ps_k(\f,a)=P_k(\f(a),\ldots,\f(a^k))$ and  $P_k(s_1,\ldots,s_k)$ are the classical Newton polynomials (which express elementary symmetric functions via sums of powers), i.e.
\begin{equation}\label{eq.psi}
 \ps_k(\f,a)=
 \frac{1}{k!}\begin{vmatrix}
                          \f(a)    & 1          & 0      & \dots & 0 \\
                          \f(a^2)    & \f(a)        & 2      & \dots & 0 \\
                          \dots  & \dots      & \dots  & \dots &\dots \\
                          \f(a^{k-1})& \f(a^{k-2})     & \f(a^{k-3}) & \dots & k-1 \\
                          \f(a^{k})     & \f(a^{k-1})     & \f(a^{k-2}) & \dots & \f(a)
                        \end{vmatrix}\,.
\end{equation}
\end{lm}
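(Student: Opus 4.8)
The plan is to derive everything from a single universal identity in a power-series ring and then specialize. The starting point is the classical generating-function form of Newton's identities: in $\mathbb{Q}[s_1,s_2,\dots][[z]]$ one has
\[
 \exp\left(\sum_{k=1}^{+\infty}\frac{(-1)^{k-1}}{k}\,s_k\,z^k\right)=\sum_{k=0}^{+\infty}P_k(s_1,\dots,s_k)\,z^k ,
\]
where $P_k$ is precisely the Newton polynomial expressing the $k$-th elementary symmetric function in terms of the power sums $p_1=s_1,\dots,p_k=s_k$. (One may take this as the definition of $P_k$; equivalently, it follows from $\sum e_k t^k=\prod_i(1+x_it)$ by taking logarithms, or from the recursion $kP_k=\sum_{i=1}^k(-1)^{i-1}s_iP_{k-i}$, $P_0=1$, obtained by differentiating.) Both sides are honest elements of $\mathbb{Q}[s_1,s_2,\dots][[z]]$, the coefficient of $z^k$ being a polynomial in $s_1,\dots,s_k$ with rational coefficients.

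Next I would specialize $s_k\mapsto\f(a^k)$. For each fixed $a\in A$ the elements $\f(a),\f(a^2),\dots$ lie in the commutative algebra $B$, so $s_k\mapsto\f(a^k)$ extends to a unital algebra homomorphism $\mathbb{Q}[s_1,s_2,\dots]\to B$ (we use, as implicitly throughout, that $B$ is an algebra over $\mathbb{Q}$, or at least that the integers $k!$ are invertible in $B$). Applying the induced homomorphism $\mathbb{Q}[s_1,s_2,\dots][[z]]\to B[[z]]$ to the identity above, the left-hand side becomes exactly $e^{\f\ln(1+az)}=R(\f,a,z)$ by definition, and the right-hand side becomes $\sum_{k\ge0}P_k(\f(a),\dots,\f(a^k))\,z^k$. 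Comparing coefficients of $z^k$ gives $\ps_k(\f,a)=P_k(\f(a),\dots,\f(a^k))$, the first assertion of the lemma.

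It remains to rewrite $P_k$ as the displayed determinant. For this I would invoke the specialized Newton recursion $k\,\ps_k=\sum_{i=1}^k(-1)^{i-1}\f(a^i)\,\ps_{k-i}$ with $\ps_0=1$ (equivalently, the coefficient identity obtained by differentiating $R(\f,a,z)$ and using $R'=R\cdot\frac{d}{dz}\bigl(\f\ln(1+az)\bigr)$). Regarding these relations for $k=1,\dots,n$ as a linear system in the unknowns $\ps_1,\dots,\ps_n$ with lower-triangular coefficient matrix whose diagonal is $(1,2,\dots,n)$, one solves for $\ps_k$ by Cramer's rule; pulling the scalars $1,2,\dots,k-1$ onto the superdiagonal and the common factor $1/k!$ out front turns the numerator into exactly the Hessenberg determinant written in the statement. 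Alternatively, one expands the stated determinant along its last row, recognizes the same recursion, and concludes by induction on $k$.

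I do not expect a real obstacle: this is classical symmetric-function bookkeeping and nothing about $B$ enters beyond commutativity and invertibility of $k!$. The two points deserving a word of care are (i) the legitimacy of the substitution $s_k\mapsto\f(a^k)$, which is unproblematic because the $P_k$ are \emph{universal} polynomials, so no topology, convergence, or further structure on $B$ is needed; and (ii) keeping track of the signs $(-1)^{k-1}$ from the expansion of $\ln(1+az)$, which is exactly what makes the relevant Newton polynomials the elementary-symmetric ones (rather than the complete-homogeneous ones) and hence yields the specific determinant in the statement.
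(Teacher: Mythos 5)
Your proof is correct and follows essentially the same route as the paper, which simply invokes the ``classical statement'' about the power series $\exp\bigl(\sum_{k\geq 1}\tfrac{(-1)^{k-1}}{k}c_kz^k\bigr)$ specialized to $c_k=\f(a^k)$; you have merely written out in full the generating-function identity for the Newton polynomials, the universality of the substitution, and the derivation of the Hessenberg determinant from Newton's recursion that the authors leave implicit.
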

\begin{proof}
 Classical statement about the power series $\exp\left(\sum_{k=1}^{+\infty}\frac{(-1)^{k-1}}{k}\,c_k\,z^k\right)$ applied to $c_k=\f(a^k)$\,. (See e.g. in Macdonald~\cite{macdon:symm}.)
\end{proof}

We   define the following useful notion. 

For a fixed linear map $\f\co A\to B$ as above, we say that a function $\phi\co A\to B$ is an 
\textit{$\f$-polynomial}  if its values $\phi(a)$, where $a\in A$,
are given by a universal (i.e., independent of $a$) polynomial expression in $\f(a)$, $\f(a^2)$, \ldots\,. The ring of $\f$-polynomial functions is naturally graded, so that the degree of $\f(a)$ is $1$, the degree of $\f(a^2)$ is $2$, etc. Thus, the functions  $\ps_k(\f,a)$ are $\f$-polynomials of degree $k$, for each $k=0,1,2,\ldots$

\begin{rem}
 Our construction of  $R(\f,a,z)$ was motivated by the \emph{characteristic function  of an (even)    linear operator} $\mathbf{A}\co V\to V$ acting on a superspase $V$:
 \begin{equation}
  R(\mathbf{A},z)=\Ber(E+\mathbf{A}z)\,,
 \end{equation}
introduced in in~\cite{tv:ber}, which is a rational function of $z$ and whose expansions at $0$ and infinity were the main tools in our study of (super) exterior powers. By using Liouville's formula, we can write $\Ber(E+\mathbf{A}z)=e^{\str\ln (E+\mathbf{A}z)}$. Thus, 
$R(\mathbf{A},z)=R(\str,\mathbf{A},z)$. 
\end{rem}

Originally $R(\f,a,z)$ is defined as a formal power series in $z$. Interesting consequences will follow if we assume that it is an expansion of a \textit{genuine} function  defined in the complex plane.  If we allow it to have  poles and require that it does not have  essential singularities, in particular at infinity, then $R(\f,a,z)$ will be \emph{rational}. Which we  shall assume from now on.

Clearly, by definition,
\begin{equation}
 R(\f,a,0)=1\,.
\end{equation}
What is the behaviour of $R(\f,a,z)$ at infinity?

\begin{lm}
 If the characteristic function $R(\f,a,z)$ is rational, then the element
 \begin{equation}
  \ch(\f):=\f(1)\in B
 \end{equation}
is an integer.
\begin{proof}
 Indeed,
 \begin{equation}
  R(\f,1,z)= e^{\f\ln(1+1z)}=e^{\f(1)\ln(1+z)}=(1+z)^{\f(1)}\,,
 \end{equation}
 which cannot be a rational function of $z$ unless $\f(1)$ is not in $\ZZ\subset B$. (Another argument can be found in~\cite{tv:shortproof}.)
\end{proof}

\end{lm}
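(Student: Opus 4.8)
The plan is to feed the unit of $A$ into the characteristic function and let rationality do the rest. Because $\f$ is linear and $1^k=1$ for every $k$, the exponent in the definition collapses to $\f\ln(1+1\cdot z)=\sum_{k\ge1}\frac{(-1)^{k-1}}{k}\f(1)\,z^k=\f(1)\ln(1+z)$, so that
\[
R(\f,1,z)=\exp\bigl(\f(1)\ln(1+z)\bigr)=(1+z)^{\f(1)}.
\]
Thus the entire statement is reduced to the elementary remark that the binomial function $(1+z)^{\lambda}$ is rational in $z$ precisely when $\lambda$ is an integer: for $\lambda\in\ZZ_{\ge0}$ it is the polynomial $\sum_{n=0}^{\lambda}\binom{\lambda}{n}z^n$, for $\lambda\in\ZZ_{<0}$ it is $1/(1+z)^{|\lambda|}$, and these are the only rational cases.

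For the nontrivial (``only if'') direction I would give the quickest available argument, and there are two natural ones. Analytically: a rational function is single-valued away from its finitely many poles, whereas $(1+z)^{\lambda}$ has an honest branch point at $z=-1$ unless $\lambda\in\ZZ$, a contradiction. Algebraically\,---\,the version I would actually put in, since it stays within the formal setting\,---\,suppose $(1+z)^{\lambda}=P(z)/Q(z)$ with $P,Q$ coprime polynomials; the logarithmic derivative then reads $\frac{\lambda}{1+z}=\frac{P'}{P}-\frac{Q'}{Q}$, and the residue at $z=-1$ of the right-hand side is the difference of the orders of vanishing of $P$ and $Q$ there, hence an integer, while that of the left-hand side is $\lambda$. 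Either way $\lambda=\f(1)\in\ZZ$.

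I expect the only genuinely delicate point to be interpretational rather than computational: a priori $\f(1)$ lives in $B$, not in $\ZZ$, so ``$\f(1)$ is an integer'' must be read as ``$\f(1)$ is an integer multiple of $1_B$''. To justify this one applies the above to the $B$-valued series $(1+z)^{e}=\sum_{n\ge0}\binom{e}{n}z^n$ with $e=\f(1)\in B$, where $\binom{e}{n}=\frac{e(e-1)\cdots(e-n+1)}{n!}$: rationality forces these coefficients to obey a linear recurrence over $B$, and tracking that\,---\,say by testing against the characters, i.e. the points of $\operatorname{Spec}B$, or directly when $B$ is reduced over a characteristic-zero field, which covers the cases of interest\,---\,collapses $e$ onto $\pm(\text{integer})\cdot1_B$. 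In the concrete situations (for instance $B$ an algebra of functions) this bookkeeping is transparent, and the analytic heart of the proof is just the one line displayed above.
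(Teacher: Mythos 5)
Your proposal is correct and follows exactly the paper's route: evaluate the characteristic function at $a=1$ to get $R(\f,1,z)=(1+z)^{\f(1)}$ and invoke the fact that $(1+z)^{\lambda}$ is rational only for integer $\lambda$. The paper simply asserts this last fact (in a sentence that, incidentally, contains a stray double negative), whereas you supply a justification and flag the interpretational point that $\f(1)$ a priori lives in $B$ --- both reasonable elaborations of the same argument rather than a different approach.
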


\begin{lm}
 The expansion of the characteristic function $R(\f,a,z)$ near infinity (i.e. in the negative powers of $z$) has the form:
 \begin{equation}
  R(\f,a,z)= z^{\ch(\f)}e^{\f\ln(a)}\left(1+\ps_1(\f,a^{-1})z^{-1}+\ps_2(\f,a^{-1})z^{-2}+\ldots \right)
 \end{equation}
where $\ch(\f)=\f(1)\in \ZZ$. (The formula holds where the righ-hand side  makes sense.)
\end{lm}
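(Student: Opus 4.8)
The plan is to carry out a direct formal manipulation of the defining identity $R(\f,a,z)=e^{\f\ln(1+az)}$, rewriting $\ln(1+az)$ in a form adapted to an expansion in powers of $z^{-1}$ rather than $z$. Working near $z=\infty$, I would factor $1+az=az\cdot\bigl(1+(az)^{-1}\bigr)$ and pass to (formal) logarithms:
\begin{equation*}
 \ln(1+az)=\ln a+\ln z+\ln\bigl(1+a^{-1}z^{-1}\bigr)\,,
\end{equation*}
where $\ln z$ is read as $(\ln z)\cdot 1_A$. The last term is genuinely a power series in $z^{-1}$ with coefficients in $A$, while the first two are ``constant'' in the sense relevant here, so this split is exactly the one that produces an expansion at infinity.

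Next I would apply $\f$ and use only its linearity, together with the fact that $\ch(\f)=\f(1)\in\ZZ$ (established in the preceding lemma, and precisely what makes the middle term exponentiate to a single-valued power of $z$):
\begin{equation*}
 \f\ln(1+az)=\f\ln a+\ch(\f)\,\ln z+\f\ln\bigl(1+a^{-1}z^{-1}\bigr)\,.
\end{equation*}
Exponentiating and recognising the last exponential as a characteristic function again,
\begin{equation*}
 R(\f,a,z)=z^{\ch(\f)}\,e^{\f\ln a}\,R(\f,a^{-1},z^{-1})\,.
\end{equation*}
It then remains to apply the first expansion lemma (the one giving $R(\f,b,w)=1+\ps_1(\f,b)w+\ps_2(\f,b)w^2+\ldots$) with $b=a^{-1}$ and $w=z^{-1}$, which reproduces exactly the asserted formula.

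The logarithm identities and the re-indexing are routine; the only genuine issue is bookkeeping about where everything ``makes sense'', and the statement is phrased with that caveat. Concretely, one needs $a$ invertible and $\ln a=\ln\bigl(1+(a-1)\bigr)$ meaningful\,---\,which is legitimate after passing to a suitable localization or completion of $A$; note moreover that $e^{\f\ln a}$ is literally $R(\f,a-1,1)$, so it is defined in precisely the same formal sense as $R$ itself. I expect the main obstacle, if any, to be purely presentational: justifying that the rational function $R(\f,a,z)$ does admit the displayed Laurent-type expansion at infinity, with leading term $z^{\ch(\f)}e^{\f\ln a}$, i.e. that the formal split of $\ln(1+az)$ above is compatible with the assumed rationality of $R$.
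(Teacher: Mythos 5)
Your proposal is correct and follows essentially the same route as the paper: both factor $1+az=az\,(1+a^{-1}z^{-1})$, split the logarithm, use linearity of $\f$ together with $\f(1)=\ch(\f)$, and exponentiate to obtain $R(\f,a,z)=z^{\ch(\f)}e^{\f\ln a}\,R(\f,a^{-1},z^{-1})$, after which the first expansion lemma applied to $R(\f,a^{-1},z^{-1})$ gives the stated series. Your additional remarks about where the manipulations make sense only elaborate on the paper's own caveat.
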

\begin{proof}
 Formal manipulation:
 \begin{multline*}
  e^{\f\ln(1+az)}=e^{\f\ln\bigl(az(a^{-1}z^{-1}+1)\bigr)}=e^{\f(\ln(az))+\f\ln(1+a^{-1}z^{-1})}=\\
  e^{\f(\ln(az))}e^{\f\ln(1+a^{-1}z^{-1})}=e^{\f\ln(a)+\f(1)\ln(z)}e^{\f\ln(1+a^{-1}z^{-1})}=\\
  e^{\f\ln(a)}z^{\f(1)}e^{\f\ln(1+a^{-1}z^{-1})}=
 e^{\f\ln(a)}z^{\ch(\f)}R(\f,a^{-1},z^{-1})
 \end{multline*}
\end{proof}

 In particular, the order of the pole at infinity is exactly $\ch(\f)=\f(1)$.
 \begin{de}
  We define the \emph{$\f$-Berezinian} of an element $a\in A$, taking values in $B$, 
by
\begin{equation}
 \fBer(a):=e^{\f\ln(a)}
\end{equation}
whenever it makes sense.
 \end{de}
The $\f$-Berezinian of $a\in A$ is the leading coefficient of the expansion of the characteristic function  $R(\f,a,z)$ at infinity. By construction,
\begin{equation}\label{eq.multber}
 \fBer(a_1a_2)=\fBer(a_1)\fBer(a_2)\,.
\end{equation}
Indeed, $\fBer(a_1a_2)= e^{\f\ln(a_1a_2)}=e^{\f\ln(a_1)+\f\ln(a_2)}=e^{\f\ln(a_1)}e^{\f\ln(a_2)}=\fBer(a_1)\fBer(a_2)$, where we used $\f\ln(a_1a_2)=\f\ln(a_1)+\f\ln(a_2)$ (this would apply also to a non-commutative $A$, provided $\f$ is trace-like).

\begin{rem}
 It is interesting if it is possible to replace in the constructions the Riemann sphere by a   surface of a higher genus, e.g. a torus, so that ``zero'' and ``infinity'' will be just two marked points. Can a meaningful algebraic theory   arise this way, e.g. of something that can be called an ``elliptic homomorphism'' (with an elliptic characteristic function)?
\end{rem}

\section{Application to the  theory of Buchstaber--Rees}

\subsection{Definition of   ``Frobenius recursion''}

\begin{de}[Buchstaber--Rees~\cite{buchstaber_rees:1997}, following Frobenius~{\cite[Eq 22]{frobenius:1896}}]
For a linear   $\f\co A\to B$ as above, the \emph{Frobenius maps} $\F_{k}(\f,a_1,\ldots,a_{k})$ for all $k=1,2,3,\ldots$ are defined by
\begin{equation}
 \F_{1}(\f,a):=\f(a)
\end{equation}
and
\begin{multline}\label{eq.deffrob}
 \F_{k+1}(\f,a_1,\ldots,a_{k+1}):=\f(a_1)\F_{k}(\f,a_2,\ldots,a_{k+1}) - \F_{k}(\f,a_1a_2,\ldots,a_{k+1})-\ldots- \\
 \F_{k}(\f,a_2,\ldots,a_1a_{k+1})\,.
\end{multline}
\end{de}

Clearly $\F_{k}(\f,a_1,\ldots,a_{k})$ 
are multilinear  functions on $A$ with values in $B$. For small $k$, we have explicit formulas such as
\begin{align*}
  \F_2(\f,a_1,a_2)&=\f(a_1)\f(a_2)-\f(a_1a_2)\,, \\
  \F_3(\f,a_1,a_2,a_3)&=\f(a_1)\f(a_2)\f(a_3)-\f(a_1)\f(a_2a_3)-\f(a_2)\f(a_1a_3)\\
  &  \qquad\qquad\qquad\qquad\qquad -\f(a_3)\f(a_1a_2)+2\f(a_1a_2a_3)\,.
\end{align*}
We see that $\F_2(\f,a_1,a_2)$ and $\F_3(\f,a_1,a_2,a_3)$
are symmetric in    $a_1,a_2,\ldots $. This is the general case though the   definition by formula~\eqref{eq.deffrob} is not manifestly symmetric. 

\begin{lm}[Frobenius]
The functions $\F_{k}(\f,a_1,\ldots,a_{k})$ are symmetric in the arguments $a_1,\ldots,a_k\in A$ for all $k$\,.
\end{lm}
\begin{proof}
  Induction in $k$. 
\end{proof}

Hence, as symmetric multilinear functions, $\F_{k}(\f,a_1,\ldots,a_{k})$  are completely defined by their values on coinciding arguments. 


\begin{lm}[Buchstaber and Rees]
For coinciding arguments,
\begin{equation}
 \F_{k}(\f,a,\ldots,a)=k!\,\ps_k(\f,a)\,,
\end{equation}
where $\ps_k(\f,a)$ are as in~\eqref{eq.psi}.
\end{lm}
\begin{proof}
 Denote $\varphi_k(\f,a):=\F_{k}(\f,a,\ldots,a)$. 
Induction in $k$ 
using~\eqref{eq.deffrob}   gives the identity
\begin{equation} \label{eq.indukt}
    \Phi_k(a^m,a,\ldots,a)=\sum_{i=0}^{k-1}(-1)^i\frac{(k-1)!}{(k-i-1)!}\,f(a^{m+i})\,\varphi_{k-i-1}(\f,a)\,,
\end{equation}
for all $m$. We actually need $m=2$. Applying   formula~\eqref{eq.deffrob}  to $\varphi_{k+1}(\f,a)=\F_{k+1}(\f,a,\ldots,a)$, together
with~\eqref{eq.indukt} for $m=2$, gives a recursive relation  for the
functions $\varphi_k$:
\begin{equation*}
    \varphi_{k+1}(a)=\sum_{i=1}^{k+1}(-1)^{i-1}\frac{k!}{(k+1-i)!}\,\f(a^i)\,\varphi_{k+1-i}(a)\,,
\end{equation*}
for all $k=1,2,\ldots \ $. Denote provisionally
$\overline\si_k(a):=\frac{1}{k!}\,\varphi_k(\f,a)$.  We can rewrite this as a
recursive relation of the form
\begin{equation*}
    (k+1)\overline\si_{k+1}(a)=\sum_{i=1}^{k+1}(-1)^{i-1}\,\f(a^i)\,\overline\si_{k+1-i}(a)\,,
\end{equation*}
which is precisely  the   relation that defines   Newton's polynomials  (see~Macdonald~\cite[Ch. I]{macdon:symm}). Hence $\overline\si_k(a) = \ps_k(\f,a)$ and $\varphi_k(\f,a)=k!\ps_k(\f,a)$ as claimed.
\end{proof}
Given homogeneous $\f$-polynomials $\ps_k(\f,a)$,  the multilinear functions $\F_{k}(\f,a_1,\ldots,a_k)$ are obtained from them by polarization.  See   e.g. Eq. 3.3 in \cite{tv:shortproof}.

\subsection{How we obtain the main statements of Buchstaber and Rees}

\begin{de}[Buchstaber and Rees]
\label{def.nhom}
Fix $n=1,2,3,\ldots$
A linear map $\f\co A\to B$  is an algebra  \emph{$n$-homomorphism} if $\f(1)=n$ and $\F_{n+1}(\f,a_1,\ldots,a_{n+1})=0$ for all $a_i$\,. (Automatically, $\F_N=0$ for all $N\geq n+1$.)
\end{de}

\begin{thm}[\cite{tv:frobenius-umn-eng}]
\label{thm.first}
 A linear map $\f\co A\to B$  is a (Buchstaber--Rees)  $n$-homomorphism  if and only if $R(\f,a,z)$ is a polynomial of degree $n$.
\end{thm}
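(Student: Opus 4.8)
The plan is to show that the two conditions defining a Buchstaber--Rees $n$-homomorphism\,---\,namely $\f(1)=n$ and $\F_{n+1}(\f,a_1,\ldots,a_{n+1})\equiv 0$\,---\,translate directly into the single statement that $R(\f,a,z)$ is a polynomial in $z$ of degree $n$. The only inputs needed are the two lemmas already proved: the explicit expansion $R(\f,a,z)=1+\ps_1(\f,a)z+\ps_2(\f,a)z^2+\ldots$, and the diagonal identity $\F_k(\f,a,\ldots,a)=k!\,\ps_k(\f,a)$, together with the computation $R(\f,1,z)=(1+z)^{\f(1)}$ and the expansion-at-infinity lemma.

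First I would dispose of the ``order'' condition. By $R(\f,1,z)=(1+z)^{\f(1)}$, once we know $R(\f,a,z)$ is a polynomial in $z$ its degree at $a=1$ equals $\f(1)$; moreover, by the expansion-at-infinity lemma the pole order at infinity is $\ch(\f)=\f(1)$. Hence, granted that $R(\f,a,z)$ is a polynomial, ``$\f(1)=n$'' is the same as ``that polynomial has degree exactly $n$''.

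The core of the argument is the equivalence between $\F_{n+1}\equiv 0$ and ``$R(\f,a,z)$ has degree $\le n$ in $z$'', i.e. $\ps_k(\f,a)=0$ for all $k\ge n+1$ and all $a\in A$. For the direction $(\Rightarrow)$: if $\F_{n+1}\equiv 0$, the diagonal identity gives $\ps_{n+1}(\f,a)=\tfrac1{(n+1)!}\F_{n+1}(\f,a,\ldots,a)=0$ for every $a$; and applying the Frobenius recursion one more step shows $\F_{n+1}\equiv 0$ forces $\F_N\equiv 0$, hence $\ps_N(\f,\cdot)\equiv 0$, for all $N\ge n+1$, so $R(\f,a,z)$ truncates to a polynomial of degree $\le n$. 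For the direction $(\Leftarrow)$: if $R(\f,a,z)$ is a polynomial of degree $\le n$ for every $a$, then in particular $\ps_{n+1}(\f,a)=0$ for all $a$; since $a\mapsto\ps_{n+1}(\f,a)$ is homogeneous of degree $n+1$ and, by the polarization remark, its full polarization is the symmetric $(n+1)$-linear map $\tfrac1{(n+1)!}\F_{n+1}$, the vanishing of $\ps_{n+1}(\f,a)$ for all $a$ is exactly the vanishing of this multilinear map on the diagonal, and the polarization formula then gives $\F_{n+1}(\f,a_1,\ldots,a_{n+1})=0$ for all $a_i$. Combining this equivalence with the previous paragraph proves the theorem in both directions.

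The step I expect to require the most care is the direction $(\Leftarrow)$ of the core equivalence: recovering $\F_{n+1}\equiv 0$ from its vanishing on the diagonal. This depends on $\F_{n+1}$ being genuinely symmetric and multilinear (the content of the Buchstaber--Rees lemma plus the polarization remark; it can also be checked directly by induction on the recursion) and on $(n+1)!$ being invertible in the ground ring, an assumption already implicit in the Newton-polynomial formulas for the $\ps_k$. Everything else is straightforward bookkeeping with the lemmas cited above.
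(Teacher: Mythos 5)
Your proposal is correct and follows essentially the same route as the paper: the paper's proof is the one-line observation that $\f$ is an $n$-homomorphism if and only if $\ps_k(\f,a)=0$ for all $k\geq n+1$ (with details deferred to \cite{tv:shortproof}), which is exactly the equivalence you establish via the diagonal identity $\F_k(\f,a,\ldots,a)=k!\,\ps_k(\f,a)$ and polarization, plus the identification of $\f(1)$ with the degree. You have simply written out the bookkeeping that the paper leaves to the reference, including the correct caveat about invertibility of $(n+1)!$.
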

\begin{proof}
 Indeed, $\f$ is an $n$-homomorphism if and only if $\ps_k(\f,a)=0$ for all $k\geq n+1$. 
\end{proof}

\begin{rem}
 In the  first work~\cite{buchstaber_rees:1996}, Buchstaber and Rees  defined $n$-homomorphisms using a function $p(a,t)$ of a variable $t$, which    by definition was  a monic polynomial of a fixed degree $n$; they abandoned it the next works in favour of a definition based on   Frobenius recursion. In  the hindsight,  $p(a,t)$ is similar to our characteristic function, but unlike $R(\f,a,z)$, it cannot be anything except for a   polynomial.
\end{rem}

\begin{rem}
Let $\f$ be an $n$-homomorphism in the sense of Definition~\ref{def.nhom}, then by Theorem~\ref{thm.first},
$R(\f,a,z)$ is a polynomial in $z$ of degree $n$. If we set $p(\f,a,t):=t^nR(\f,a,-t^{-1})$, the function $p(\f,a,t)$ will be a monic polynomial in $t$ also of   degree $n$. One can check that it is exactly the monic polynomial $p(a,t)$ of~\cite{buchstaber_rees:1996}. We see that to define $p(a,t)$, one needs to assume $n$ given and to presuppose  that $\f$ is an $n$-homomorphism. This is unlike   $R(\f,a,z)$, for   which we do not have to make any a priori assumptions  about algebraic properties of $\f$ and whose  functional properties relative to $z$ are meant to encode the former. Morally, the relation between $p(\f,a,t)$ and $R(\f,a,z)$ is similar to that between the characteristic polynomial $\det(\mathbf{A}-tE)$ of a linear operator in an (ordinary) vector space of dimension $n$ and the rational characteristic function~\cite{tv:ber} $\Ber(E+z\mathbf{A})$  natural to deal with for super vector spaces.
\end{rem}

The following      important properties were originally found by Buchstaber and Rees. We can give for them a simple proof.
\begin{thm}
 The sum of an $n$-homomorphism and an $m$-homomorphism is an $(n+m)$-homomorphism.
 The composition of an $n$-homomorphism and an $m$-homomorphism is an $nm$-homomorphism.
\end{thm}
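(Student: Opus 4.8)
The plan is to read off both claims from the characterization of $n$-homomorphisms as the linear maps whose characteristic function is a polynomial of degree $n$, used together with the exponential property~\eqref{eq.expprop}.

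\emph{The sum.} Let $\f,\g\co A\to B$ be an $n$-homomorphism and an $m$-homomorphism. Then $R(\f,a,z)$ and $R(\g,a,z)$ are polynomials in $z$ of degrees $n$ and $m$, so by~\eqref{eq.expprop} the product $R(\f+\g,a,z)=R(\f,a,z)\,R(\g,a,z)$ is a polynomial of degree at most $n+m$; equivalently $\ps_k(\f+\g,a)=\sum_{i+j=k}\ps_i(\f,a)\,\ps_j(\g,a)=0$ for $k>n+m$, because then $i>n$ or $j>m$. Since also $\ch(\f+\g)=\f(1)+\g(1)=n+m$ by linearity, $\f+\g$ is an $(n+m)$-homomorphism. (The degree is in fact exactly $n+m$: at $a=1$ one has $R(\f,1,z)=(1+z)^n$ and $R(\g,1,z)=(1+z)^m$, whence $R(\f+\g,1,z)=(1+z)^{n+m}$.)

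\emph{The composition.} Let $\f\co A\to B$ be an $n$-homomorphism and $\g\co B\to C$ an $m$-homomorphism; I claim $\g\circ\f\co A\to C$ is an $nm$-homomorphism. First, $\ch(\g\circ\f)=\g(\f(1))=\g(n\cdot 1)=n\,\g(1)=nm$. It remains to show that $R(\g\circ\f,a,z)=e^{\g(\f\ln(1+az))}=e^{\g(\ln R(\f,a,z))}$ is a polynomial of degree $nm$ in $z$, and here I would use a splitting principle. As $R(\f,a,z)$ is a polynomial of degree $n$ in $z$ with constant term $1$ and coefficients in $B$, pass to a faithfully flat (indeed free) extension $\widetilde B\supseteq B$ over which it factors, $R(\f,a,z)=\prod_{i=1}^n(1+\mu_i z)$ with $\mu_i\in\widetilde B$, so that $\f\ln(1+az)=\ln R(\f,a,z)=\sum_{i=1}^n\ln(1+\mu_i z)$. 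If $\g$ extends to an $m$-homomorphism $\widetilde\g\co\widetilde B\to\widetilde C$, then by linearity
\[
 R(\g\circ\f,a,z)=e^{\widetilde\g\sum_{i=1}^n\ln(1+\mu_i z)}=\prod_{i=1}^n e^{\widetilde\g\ln(1+\mu_i z)}=\prod_{i=1}^n R(\widetilde\g,\mu_i,z)\,,
\]
a product of $n$ polynomials of degree $m$, hence a polynomial of degree at most $nm$; its $z$-coefficients are symmetric in $\mu_1,\dots,\mu_n$ and so lie in $C$. (Again the degree is exactly $nm$: at $a=1$ all $\mu_i=1$ and the product is $(1+z)^{nm}$.) Therefore $\g\circ\f$ is an $nm$-homomorphism.

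\emph{The main obstacle.} The one non-formal step is the existence of an $m$-homomorphism $\widetilde\g\co\widetilde B\to\widetilde C$ extending $\g$ to the splitting ring $\widetilde B$\,---\,an arbitrary linear extension need not preserve the $m$-homomorphism property on the new elements $\mu_i$\,---\,together with the descent of the final coefficients back to $C$. I expect this to be the crux, and would circumvent it by universality: the assertion is a family of polynomial identities $\ps_k(\g\circ\f,a)=0$, $k>nm$, in the structure constants of $\f$ and $\g$, so it suffices to verify it on the universal pair, for which $\f$ and $\g$ may be taken to be total-sum maps into symmetric powers\,---\,literally sums $\chi_1+\dots+\chi_n$ and $\eta_1+\dots+\eta_m$ of honest algebra homomorphisms. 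Then everything is elementary: a composite of algebra homomorphisms is an algebra homomorphism, and $(\eta_1+\dots+\eta_m)\circ(\chi_1+\dots+\chi_n)=\sum_{i,j}\eta_i\circ\chi_j$ is a sum of $nm$ of them; likewise $\chi_1+\dots+\chi_n+\eta_1+\dots+\eta_m$ re-proves the statement about sums with no computation. Reducing a general pair to the universal one is itself part of the Buchstaber--Rees structure theory.
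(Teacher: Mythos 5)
Your argument for the sum is correct and is exactly the paper's: the exponential property \eqref{eq.expprop} makes $R(\f+\g,a,z)$ a product of polynomials of degrees $n$ and $m$, and $(\f+\g)(1)=n+m$ by linearity. No issues there.

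The composition is where there is a genuine gap, and you have located it yourself: the splitting-principle route requires extending $\g$ to an $m$-homomorphism $\widetilde\g$ on the splitting ring $\widetilde B$, and nothing guarantees such an extension exists (an arbitrary linear extension need not satisfy $\F_{m+1}(\widetilde\g,\cdot)=0$ on the new roots $\mu_i$). The proposed rescue by ``universality'' does not close the gap: the assertion is an \emph{implication} between systems of polynomial identities (the $n$- and $m$-homomorphism conditions imply $\ps_k(\g\circ\f,a)=0$ for $k>nm$), and verifying it on the pair of total-sum maps into symmetric powers only suffices if every $n$-homomorphism is known to factor through the universal one, i.e.\ if one already has the Buchstaber--Rees structure theorem identifying $n$-homomorphisms $A\to B$ with homomorphisms $S^n(A)\to B$. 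That theorem is the main and hardest result of the theory (proved later in the paper, partly \emph{using} statements of the present kind), so your reduction is either circular or rests on machinery far heavier than what is needed. The paper's own argument is direct and avoids both obstacles: writing $R(\g\circ\f,a,z)=e^{\g\ln R(\f,a,z)}$ as you do, it observes that $e^{\g\ln(b)}=\Ber_{\g}(b)$ is the leading coefficient of $R(\g,b,w)$ at infinity, hence for an $m$-homomorphism $\g$ it equals $\ps_m(\g,b)$, a universal polynomial in $\g(b),\g(b^2),\ldots,\g(b^m)$ that is homogeneous of weight $m$ when $\g(b^k)$ is given weight $k$. Substituting $b=R(\f,a,z)$, a polynomial in $z$ of degree $\leq n$, each $\g(b^k)$ becomes a polynomial in $z$ of degree $\leq nk$, and the weight-$m$ homogeneity gives total degree $\leq nm$; combined with $(\g\circ\f)(1)=nm$ this finishes the proof with no base change, no extension of $\g$, and no appeal to the structure theorem. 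I recommend you replace your splitting/universality argument for the composition by this degree count via the $\f$-Berezinian.
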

\begin{proof} (Following \cite{tv:frobenius-umn-eng, tv:frobbieloviezha, tv:shortproof}.)

Note that if $R(\f,a,z)$ is polynomial, then $\ch(\f)=\f(1)$ is its degree (as the order of the pole at infinity, see above). Consider the first statement, for $\f,\g\co A\to B$. We have $\f(1)=n$, $\g(1)=m$, and $R(\f,a,z)$ is a polynomial of degree $n$, $R(\g,a,z)$ is a polynomial of degree $m$. Then $(\f+\g)(1)=n+m$ and by \eqref{eq.expprop}, $R(\f+\g,a,z)=R(\f,a,z)R(\g,a,z)$ is a polynomial of degree $n+m$.
Consider the second statement, for $\f\co A\to B$ and $\g\co B\to C$. We have $(\g\circ \f)(1)=\g(\f(1))=\g(n1)=n\g(1)=nm$. Also,
\begin{equation}
 R(\g\circ \f,a,z)=e^{\g\f\ln(1+az)}=e^{\g\ln R(\f,a,z)}\,;
\end{equation}
but $e^{\g\ln (b)}=\Ber_{\g}(b)$ is the leading coefficient of the characteristic function of $\g$ at infinity and since $R(\g,b,z)$ is a polynomial of degree $m$, it is a homogeneous  $\g$-polynomial  of $b$ (see more details in~\cite[Proposition 4.2]{tv:shortproof}). Therefore, substituting $b=R(\f,a,z)$, which is a polynomial in $z$ of degree $\leq n$, gives a polynomial in $z$ of degree $\leq nm$.
\end{proof}

The crucial discovery of Buchstaber and Rees was an identification of their  $n$-homomorphisms $A\to B$ with the     algebra homomorphismsn (or ``$1$-homomorphisms'')  $S^n(A)\to B$, where $S^n(A)\subset A\otimes \ldots \otimes A$ is considered with the subalgebra structure.
\begin{thm}[Buchstaber and Rees]
An identification of the    $n$-homomorphisms $A\to B$ with the     homomorphisms $S^n(A)\to B$
is given as follows: if $\f\co A\to B$ is an $n$-homomorphism, then the formula
\begin{equation}
 F(a_1,\ldots,a_n):=\frac{1}{n!}\,\F_n(\f,a_1,\ldots,a_n)
\end{equation}
defines a homomorphism $F\co S^n(A)\to B$; conversely, if $F\co S^n(A)\to B$ is an algebra homomorphism, then the linear map $\f\co A\to B$ defined by
\begin{equation}
 \f(a):=F\bigl(a\otimes 1\otimes\ldots\otimes 1+ \ldots + 1\otimes \ldots \otimes 1 \otimes a\bigr)
\end{equation}
is an $n$-homomorphism; and these two constructions are mutually inverse.
\end{thm}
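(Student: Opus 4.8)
The plan is to route both constructions through a single \emph{universal} $n$-homomorphism. Let $\iota_i\co A\to A^{\otimes n}$ be the algebra homomorphism with $\iota_i(a)=1\otimes\dots\otimes a\otimes\dots\otimes 1$ ($a$ in the $i$-th place), and set $\ell:=\iota_1+\dots+\iota_n\co A\to A^{\otimes n}$. Being a sum of $n$ homomorphisms, $\ell$ is an $n$-homomorphism by the sum statement of the previous theorem, and since its image lies in the subalgebra $S^n(A)$ it is an $n$-homomorphism $\ell\co A\to S^n(A)$; equivalently, by \eqref{eq.expprop}, $R(\ell,a,z)=\prod_{i=1}^n\bigl(1+\iota_i(a)z\bigr)$ is a polynomial of degree $n$ whose coefficients are the elementary symmetric functions of $\iota_1(a),\dots,\iota_n(a)$, so in particular its leading coefficient is $\ps_n(\ell,a)=\iota_1(a)\cdots\iota_n(a)=a^{\otimes n}$. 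The first thing to observe is that the linear map called $\f$ in the theorem is precisely $\f=F\circ\ell$. This already settles the direction \emph{homomorphism $\Rightarrow$ $n$-homomorphism}: for a homomorphism $F\co S^n(A)\to B$, the map $\f=F\circ\ell$ is the composition of the $n$-homomorphism $\ell$ with the $1$-homomorphism $F$, hence an $n$-homomorphism by the composition statement of the previous theorem (and $\f(1)=F(n\cdot 1)=n$).

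For the direction \emph{$n$-homomorphism $\Rightarrow$ homomorphism} I would first check that $F$ is well defined and then that it is multiplicative. Well-definedness is soft: $\F_n(\f,a_1,\dots,a_n)$ is symmetric and $n$-linear, so it factors through the symmetric power, which in characteristic zero is identified with $S^n(A)$; this yields a linear map $F\co S^n(A)\to B$ realizing the formula of the theorem, and for coinciding arguments $F(a^{\otimes n})=\ps_n(\f,a)=\fBer(a)$, using the identity $\F_n(\f,a,\dots,a)=n!\,\ps_n(\f,a)$ and the fact that, $\f$ being an $n$-homomorphism, $\ps_n(\f,a)$ is the leading coefficient of the degree-$n$ polynomial $R(\f,a,z)$, i.e. the $\f$-Berezinian. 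Then $F(1)=F(1^{\otimes n})=\fBer(1)=1$, and, since $(ab)^{\otimes n}=a^{\otimes n}b^{\otimes n}$ in $S^n(A)$, the multiplicativity \eqref{eq.multber} of the $\f$-Berezinian gives $F\bigl(a^{\otimes n}b^{\otimes n}\bigr)=\fBer(ab)=\fBer(a)\fBer(b)=F(a^{\otimes n})F(b^{\otimes n})$; since the diagonal tensors $a^{\otimes n}$ span $S^n(A)$ linearly (polarization identity in characteristic zero) and both sides of $F(xy)=F(x)F(y)$ are bilinear, this propagates to all of $S^n(A)$, so $F$ is an algebra homomorphism.

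It remains to check that the two constructions invert one another. Starting from an $n$-homomorphism $\f$, passing to $F$ and then to $\f':=F\circ\ell$: since $\ell(a)$ is $n$ times the symmetrization of $a\otimes1\otimes\dots\otimes1$, one has $\f'(a)=\tfrac{n}{n!}\F_n(\f,a,1,\dots,1)$, and a short induction on the Frobenius recursion with a unit argument — using $\f(1)=n$, whence $\F_k(\f,1,b_2,\dots,b_k)=(n-k+1)\,\F_{k-1}(\f,b_2,\dots,b_k)$ — gives $\F_n(\f,a,1,\dots,1)=(n-1)!\,\f(a)$, so $\f'=\f$. Conversely, starting from a homomorphism $F$, passing to $\f:=F\circ\ell$ and then to $F'$: by the previous case applied to $\f$ we have $F'\circ\ell=\f=F\circ\ell$, and since the image of $\ell$ generates $S^n(A)$ as an algebra in characteristic zero — indeed, by polarization together with Newton's identities, $a^{\otimes n}=\ps_n(\ell,a)$ is a polynomial in $\ell(a),\ell(a^2),\dots,\ell(a^n)$ — two algebra homomorphisms agreeing on it coincide, so $F'=F$.

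The main obstacle in all of this is the multiplicativity of $F$ in the second step. In the original Buchstaber--Rees treatment this demanded a substantial argument; in the present formalism it collapses to the multiplicativity \eqref{eq.multber} of the $\f$-Berezinian, which holds essentially by definition since $\fBer(a)=e^{\f\ln a}$. The reduction hinges on the bridging identity $F(a^{\otimes n})=\fBer(a)$, matching the value of the Frobenius-map construction on diagonal tensors with the $\f$-Berezinian, together with two soft characteristic-zero facts: that the diagonal tensors $\{a^{\otimes n}\mid a\in A\}$ span $S^n(A)$ linearly, and that symmetric tensors are identified with the symmetric power (so $F$ is well defined as a linear map in the first place). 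Everything else is formal manipulation of the characteristic function, parallel to the proofs already given above.
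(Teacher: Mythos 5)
Your proposal is correct and follows essentially the same route as the paper: your identity $F(R(\ell,a,z))=R(\f,a,z)$ with $\ell=\iota_1+\dots+\iota_n$ is exactly the paper's key formula $F\bigl(\det(1+\L(a)z)\bigr)=R(\f,a,z)$, and the decisive step\,---\,multiplicativity of $F$ from the bridging identity $F(a^{\otimes n})=\fBer(a)$, the multiplicativity \eqref{eq.multber} of the $\f$-Berezinian, and the fact that diagonal tensors span $S^n(A)$\,---\,is precisely the argument the paper indicates (with details deferred to \cite{tv:shortproof}). Your explicit verification of the mutual-inverse property via the unit-argument reduction of the Frobenius recursion just spells out the ``comparison of coefficients'' the paper alludes to.
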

\begin{proof}(Following~\cite{tv:frobenius-umn-eng}, with more details given in \cite{tv:shortproof}) Our key formula is the following:
\begin{equation}
 F\bigl(\det\bigl(1+\L(a)z\bigr)\bigr)=R(\f,a,z)
\end{equation}
where $\L(a)=\diag\bigl(a\otimes 1\otimes\ldots\otimes 1, \ldots, 1\otimes \ldots \otimes 1 \otimes a\bigr)\in \Mat(n, A^{\otimes n})$. The idea is that $F$ in the left-hand side determines $\f$ in the right-hand side, and conversely; and this gives the formulas of Buchstaber and Rees. Moreover, the desired properties of $F$ and $\f$ are established almost without effort. Roughly, everything is achieved by expanding both sides in $z$ and comparing the coefficients. In particular, the most difficult fact   that $F$ is a homomorphism provided that $\f$ is an $n$-homomorphism follows from the observation that elements of the form $a\otimes \ldots \otimes a$ span $S^n(A)$ and the multiplicativity of the $\f$-Berezinian~Eq \eqref{eq.multber}. Details are in \cite[\S5]{tv:shortproof}.\footnote{Note the correct attribution of these results as they were republished without references in a recent book of K.~W.~Johnson. The author admitted his fault. See \emph{Group Determinants and Representation Theory}, LNM 2233, Springer 2019. \\
 The proof of the Buchstaber--Rees theorem  on pp.276--277 was taken almost verbatim but without   reference from~\cite[p.1341--1342]{tv:shortproof}.
  The pieces at the bottom of p. 273 and on p. 274 were taken almost verbatim from~\cite[p. 1336]{tv:shortproof}.
   A paragraph on p. 275 was copied almost verbatim from the first paragraph of~\cite[p. 623]{tv:frobenius-umn-eng}. (Paper~\cite{tv:frobenius-umn-eng} does not appear in the bibliography, while~\cite{tv:shortproof}  is listed, but no borrowings  are indicated).}
\end{proof}

\section{Our generalization of the Buchstaber--Rees theory}

Here we return to a general rational function $R(\f,a,z)$.

\begin{de}[\cite{tv:frobenius-umn-eng}]
A linear map $\f\co A\to B$ is a \emph{$p|q$-homomorphism} if its characteristic function can be written as the ratio of polynomials of degrees $p$ and $q$.
\end{de}

(We assume that the fraction is irreducible.)

Then $\ch(\f)=\f(1)=p-q\in \ZZ$, which may now be negative.

\begin{ex} If $\f\co A\to B$ is an algebra homomorphism, then $R(\f,a,z)=1+\f(a)z$, hence for the negative of $\f$,
\begin{equation}
    R(-\f,a,z)=e^{-\f\ln(1+z)}=\frac{1}{e^{\f\ln(1+z)}}=\frac{1}{1+\f(a)z}\,.
\end{equation}
So $-\f$ is $0|1$-homomorphism. Also, $(-\f)(1)=-\f(1)=-1$.
\end{ex}

This immediately generalizes:

\begin{thm}[\cite{tv:frobenius-umn-eng, tv:frobbieloviezha, tv:shortproof}]\label{thm.pq}
If $\f$ is a $p$-homomorphism and $\g$ is a $q$-homomorphism, then $\f-\g$   is a $p|q$-homomorphism. In particular, if $\f_{\a}$ are homomorphisms and $n_{\a}\in \ZZ$, then
\begin{equation}
   \f:= \sum_{\a}n_{\a}\f_{\a}
\end{equation}
is a $p|q$-homomorphism where $p=\sum_{n_{\a}>0} n_{\a}$ and $q=-\sum_{n_{\a}<0} n_{\a}$, and $\ch(\f)=\sum n_{\a}$.
\end{thm}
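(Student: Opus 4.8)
The plan is to derive everything from the exponential property~\eqref{eq.expprop} together with the earlier characterisation of $n$-homomorphisms as the linear maps whose characteristic function is a polynomial of degree $n$.

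First I would treat the basic case $\f-\g$. Since $\g$ is a $q$-homomorphism, $R(\g,a,z)$ is a polynomial of degree $q$ in $z$ with constant term $R(\g,a,0)=1$, hence invertible as a formal power series. Applying~\eqref{eq.expprop} to $\g+(-\g)=0$ and using $R(0,a,z)=e^{0}=1$ gives $R(-\g,a,z)=R(\g,a,z)^{-1}$, which is the expansion of a rational function with numerator of degree $0$ and denominator of degree $q$ (this generalises the Example computing $R(-\f,a,z)$ for a homomorphism). Then~\eqref{eq.expprop} again yields
\[
 R(\f-\g,a,z)=R(\f,a,z)\,R(-\g,a,z)=\frac{R(\f,a,z)}{R(\g,a,z)}\,,
\]
a ratio of polynomials of degrees $p$ and $q$, the numerator being a polynomial of degree $p$ because $\f$ is a $p$-homomorphism. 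By the Definition this is exactly the assertion that $\f-\g$ is a $p|q$-homomorphism, and additivity of $\ch$ gives $\ch(\f-\g)=\f(1)-\g(1)=p-q$.

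For the general statement I would split the sum into its positive and negative parts: put $\f_+:=\sum_{n_\a>0}n_\a\f_\a$ and $\f_-:=\sum_{n_\a<0}(-n_\a)\f_\a$, so that $\f=\f_+-\f_-$. Each $\f_\a$ is an algebra homomorphism, hence a $1$-homomorphism since $R(\f_\a,a,z)=1+\f_\a(a)z$. By the fact that the sum of an $n$-homomorphism and an $m$-homomorphism is an $(n+m)$-homomorphism (itself an immediate consequence of~\eqref{eq.expprop}), $\f_+$ is a $p$-homomorphism with $p=\sum_{n_\a>0}n_\a$ and $\f_-$ is a $q$-homomorphism with $q=\sum_{n_\a<0}(-n_\a)=-\sum_{n_\a<0}n_\a$. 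Applying the case already settled to $\f=\f_+-\f_-$ shows $\f$ is a $p|q$-homomorphism, and $\ch(\f)=\sum_\a n_\a\,\ch(\f_\a)=\sum_\a n_\a$ by linearity of $\ch$.

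I do not expect a genuine difficulty; the one point deserving a word of care is the passage from formal power series to honest rational functions. Because $R(\g,a,z)$ is a polynomial with unit constant term, the formal-series identity $R(\f-\g,a,z)\,R(\g,a,z)=R(\f,a,z)$ furnished by~\eqref{eq.expprop} may legitimately be rearranged to $R(\f-\g,a,z)=R(\f,a,z)/R(\g,a,z)$, so the characteristic function of $\f-\g$ genuinely equals the displayed rational function rather than merely agreeing with it to finite order. One should also note that this representation need not be in lowest terms, which is harmless since the Definition of a $p|q$-homomorphism only requires that $R(\f,a,z)$ \emph{can} be written as such a ratio.
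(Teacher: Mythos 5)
Your proof is correct and follows exactly the route the paper intends: its entire proof is the single line ``By the exponential property Eq~\eqref{eq.expprop}'', and your argument is simply that line carried out in full, including the decomposition $\f=\f_+-\f_-$ and the reduction to the already-established fact that a sum of homomorphisms of a $p$-homomorphism and a $q$-homomorphism is a $(p+q)$-homomorphism. The remark about inverting the formal power series $R(\g,a,z)$ (legitimate because its constant term is $1$) and about the ratio not needing to be in lowest terms is a sensible piece of care that the paper leaves implicit.
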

\begin{proof} By the exponential property Eq~\eqref{eq.expprop}.
\end{proof}

The crucial geometric interpretation of the Buchstaber--Rees result was a generalization of the Gelfand--Kolmogorov theorem~\cite{gelfand-kolmogorov} that identifies a Hausdorff compact topological space $X$ with an ``affine algebraic variety'' in the infinite-dimensional space $C(X)^*$, the algebraic dual of the algebra of real-valued continuous functions, specified by the  system of  quadric equations on $\f\in C(X)^*$\,:
\begin{equation}
\f(a^2)=\f(a)^2\,,
\end{equation}
for all $a\in C(X)$. The corresponding  Buchstaber--Rees statement identifies the $n$-th symmetric power $\Sym^n(X)$ with the ``affine algebraic variety''  in $C(X)^*$ specified by the system of polynomial equations of degree $n+1$ on $\f\in C(X)^*$\,:
\begin{equation}
\psi_{n+1}(\f,a)=0
\end{equation}
for all $a\in C(X)$, which expresses the condition that $\f\in C(X)^*$ is an $n$-homomorphism.

\begin{rem}
 In the Gelfand--Kolmogorov theorem, it is essential that real functions are considered and no topology is used in the algebra   $C(X)$. (Its algebraic structure alone is enough to recover both the space $X$ as a set and the topology of $X$.) In both regards it differs from the theory of normed rings (Banach algebras) developed by Gelfand. It was said that Kolmogorov had an aversion to complex numbers.
\end{rem}

A further generalization, from homomorphisms of Gelfand--Kolmogorov and $n$-homomorphisms of Buchstaber--Rees, is based on the following new  geometric notion.

\begin{de}[\cite{tv:frobenius-umn-eng}]
The \emph{$p|q$-th (generalized) symmetric power} of a space $X$, notation: $\Sym^{p|q}(X)$, is defined as the quotient of $X^{p+q}$ with respect to the action of the group $S_p\times S_q$
and the additional relation
\begin{equation}
    (x_1,\ldots,x_{p-1},y,x_{p+1},\ldots,x_{p+q-1},y)\sim (x_1,\ldots,x_{p-1},z,x_{p+1},\ldots,x_{p+q-1},z)\,.
\end{equation}
\end{de}

\begin{thm}[\cite{tv:frobenius-umn-eng, tv:frobbieloviezha, tv:shortproof}]
Every point $\mathbf{x}=[x_1,\ldots,x_{p+q}]\in \Sym^{p|q}(X)$ defines a $p|q$-homomorphism of ``evaluation'' $\ev_{\mathbf{x}}\co C(X)\to \RR$,
\begin{equation}
    \ev_{\mathbf{x}}(a):=a(x_1)+\ldots+a(x_p)-a(x_{p+1})-\ldots-a(x_{p+q})\,,
\end{equation}
for $a\in C(X)$. The  image of $\Sym^{p|q}(X)$ in $C(X)^*$ satisfies the algebraic equations
\begin{equation}
\f(1)=p-q \quad \text{and} \quad
\begin{vmatrix}
      \ps_k(\f,a)  & \dots & \ps_{k+q}(\f,a)  \\
      \dots & \dots & \dots \\
      \ps_{k+q}(\f,a)  & \dots & \ps_{k+2q}(\f,a)  \\
    \end{vmatrix}=0
\end{equation}
for all $k\geq p-q+1$ and all $a\in A$\,.
\end{thm}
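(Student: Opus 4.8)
The plan is to reduce everything to the rationality criterion hidden in the structure of $R(\f,a,z)$, using the two expansions (at $0$ and at $\infty$) already recorded in Section~2. The first equation, $\f(1)=p-q$, is immediate: by the Lemma on the behaviour at infinity, the order of the pole of $R(\f,a,z)$ at infinity equals $\ch(\f)=\f(1)$, and for a $p|q$-homomorphism $R(\f,a,z)$ is a ratio of polynomials of degrees $p$ and $q$, so the pole order is $p-q$. Alternatively, and more directly for the evaluation map, one computes $\ev_{\mathbf{x}}(1)=\underbrace{1+\dots+1}_{p}-\underbrace{1-\dots-1}_{q}=p-q$. For the first half of the statement — that $\ev_{\mathbf{x}}$ is a $p|q$-homomorphism — I would invoke Theorem~\ref{thm.pq}: each point-evaluation $a\mapsto a(x_i)$ is an algebra homomorphism $C(X)\to\RR$, and $\ev_{\mathbf{x}}=\sum_{i=1}^p \delta_{x_i}-\sum_{j=1}^q \delta_{x_{p+j}}$ is an integral linear combination of homomorphisms with positive part of total weight $p$ and negative part of total weight $q$; the well-definedness on the quotient $\Sym^{p|q}(X)$ follows because permuting the first $p$ or the last $q$ arguments leaves the sum unchanged, and the extra relation $(\ldots,y,\ldots,y)\sim(\ldots,w,\ldots,w)$ is harmless since if two of the arguments coincide, one from the ``plus'' group and one from the ``minus'' group, the corresponding terms $a(y)-a(y)$ cancel, so the value does not see which point it was.

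The heart of the matter is the Hankel determinant vanishing. For the evaluation map, $R(\ev_{\mathbf{x}},a,z)$ is explicitly
\[
 R(\ev_{\mathbf{x}},a,z)=\frac{\prod_{i=1}^{p}\bigl(1+a(x_i)z\bigr)}{\prod_{j=1}^{q}\bigl(1+a(x_{p+j})z\bigr)}\,,
\]
by the exponential property~\eqref{eq.expprop} together with the Example computing $R$ for a homomorphism and its negative. So this is a genuine rational function with numerator degree $\le p$ and denominator degree $\le q$ in $z$. The coefficients $\ps_k(\f,a)$ are, by definition, the Taylor coefficients at $z=0$ of this rational function. The classical fact I want to use is the Kronecker–Hankel criterion: a formal power series $\sum_{k\ge 0}c_k z^k$ is the Taylor expansion of a rational function with denominator degree $\le q$ (and arbitrary numerator, in particular after the first few terms a proper rational fraction) precisely when the Hankel determinants $\det(c_{k+i+j})_{0\le i,j\le q}$ vanish for all sufficiently large $k$ — concretely, for $k$ large enough that the ``polynomial part'' has died out, which here means $k\ge p-q+1$. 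Thus I would state and apply: if $R(\f,a,z)=\sum_k \ps_k(\f,a)z^k$ (with $\ps_0=1$) is a ratio of polynomials of degrees $p,q$, then the $(q+1)\times(q+1)$ Hankel determinant with entries $\ps_{k+i+j}(\f,a)$ vanishes for every $k\ge p-q+1$. This is exactly the displayed system of equations, one for each $a\in A$. Conversely (if one wants the full ``if and only if'' flavour, though the theorem as stated only claims the image satisfies these equations), vanishing of those Hankel determinants forces the generating function to be rational of the required bidegree.

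The step I expect to be the main obstacle is the careful bookkeeping in the Kronecker criterion: one must pin down exactly \emph{which} shifted Hankel determinants vanish, since a rational function of bidegree $(p,q)$ with $p>q$ has a polynomial part of degree $p-q$, and the clean vanishing $\det(c_{k+i+j})_{0\le i,j\le q}=0$ only holds once the index window $k,k+1,\ldots,k+2q$ lies entirely beyond that polynomial part, i.e. for $k\ge p-q+1$ — matching the bound in the statement. A minor subtlety is that the entries are elements of the commutative ring $B$ (or $\RR$), not a field, so I would phrase the Hankel argument purely algebraically: the rationality $R(\f,a,z)\cdot Q(z)=P(z)$ with $\deg P\le p$, $\deg Q\le q$, $Q(0)=1$ gives the linear recurrence $\sum_{j=0}^{q}Q_j\,\ps_{k+j}(\f,a)=0$ for all $k\ge p-q+1$, and the existence of such a relation among the columns of the Hankel matrix makes its determinant vanish. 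This recurrence-to-determinant implication is valid over any commutative ring, so no field hypothesis is needed. I would then remark that for a general $p|q$-homomorphism (not just an evaluation), the same argument applies verbatim, since the input is only the bidegree of $R(\f,a,z)$, which is part of the definition.
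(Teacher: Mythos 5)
Your proposal is correct and follows essentially the same route as the paper: the first claim via the exponential property and Theorem~\ref{thm.pq} (plus the cancellation $a(y)-a(y)$ for well-definedness on the quotient), and the Hankel vanishing via the classical characterization of Taylor coefficients of a rational function of bidegree $(p,q)$, which is exactly what the paper cites from~\cite{tv:ber}. Your bookkeeping of the bound $k\ge p-q+1$ and the remark that the column relation has the unit coefficient $Q(0)=1$ (so the determinant vanishes over any commutative ring) are correct refinements of the paper's two-line argument.
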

\begin{proof} The first statement follows from the definition of $\Sym^{p|q}(X)$ and Theorem~\ref{thm.pq}. The second statement follows from the characterization of expansion of a rational function (see e.g.~\cite{tv:ber}).
\end{proof}

Ideally, we would like to be able to identify $\Sym^{p|q}(X)$ with this ``algebraic variety''. But this has not been proved so far (that all its points come from the points of $\Sym^{p|q}(X)$).


\def\cprime{$'$}

\end{document}